\documentclass[letterpaper, 10 pt, conference]{ieeeconf}

\IEEEoverridecommandlockouts

\usepackage{graphicx}

\title{\LARGE \bf
Informativity for Data-driven Prediction
}

\author{Joel Stevens and Jeremy Coulson%
\thanks{Both authors are with the Department of Electrical and Computer Engineering, University of Wisconsin–Madison,
        Madison, WI 53706, USA
        {\tt\small \{wjstevens,jeremy.coulson\}@wisc.edu}}}

\usepackage{amsmath, empheq}
\usepackage{amssymb}

\usepackage{amsthm}
\usepackage{mathtools}
\usepackage{mathrsfs}
\usepackage[hidelinks]{hyperref}
\usepackage{enumerate}
\usepackage{algpseudocode}
\usepackage{algorithm}
\usepackage{tikz}
\usepackage{pgfplots}
\pgfplotsset{compat=1.18}
\usepgfplotslibrary{groupplots}
\definecolor{matblue}{RGB}{31,119,180}
\definecolor{matorange}{RGB}{255,127,14}
\definecolor{ybadcolor}{RGB}{175,0,0}

\usepackage{caption}
\hypersetup{
	colorlinks = true,
	linkcolor = black,
	urlcolor = cyan
}

\newcommand{\genHank}[3]{\mathcal{H}_{#1}\left(#2_{#3}\right)}
\newcommand{\LTIq}{\mathsf{LTI}^{m, p}}
\newcommand{\rank}{\textrm{rank}\,}

\newcommand{\im}{\textrm{im}\,}

\newcommand{\Rn}{\mathbb{R}^n}
\newcommand{\Rm}{\mathbb{R}^m}
\newcommand{\Rp}{\mathbb{R}^p}
\newcommand{\Rq}{\mathbb{R}^q}
\newcommand{\B}{\mathcal{B}}
\newcommand{\R}{\mathbb{R}}
\newcommand{\M}{\mathcal{M}}

\newcommand{\setBuild}[2]{\left\{#1 \: \middle| \: #2 \right\}}

\newcommand{\wholes}{\mathbb{Z}_{\geq 0}}

\newcommand{\Input}{\State \textbf{Input: }}
\newcommand{\suchthat}{\textrm{ s.t. }}
\newcommand{\assumptions}{Let $(u^\data, y^\data)\in\B^*_{[0, T - 1]}$, $(u^\ini, y^\ini)\in\B^*_{[0, \Tini - 1]}$, and $u^\f\in\R^{m\Tf}$. }

\newcommand{\Tini}{T_{\textrm{ini}}}
\newcommand{\Tf}{T_{\textrm{f}}}
\newcommand{\ini}{\textrm{ini}}
\newcommand{\f}{\textrm{f}}
\newcommand{\data}{\textrm{d}}
\newcommand{\Lmax}{L_{\max}}
\newcommand{\Up}{U_\textrm{p}}
\newcommand{\Uf}{U_\textrm{f}}
\newcommand{\Yp}{Y_\textrm{p}}
\newcommand{\Yf}{Y_\textrm{f}}

\newcommand{\inform}{$(u^\data, y^\data)$ is informative for unique prediction of $u^\f$ from $(u^\ini, y^\ini)$}

\theoremstyle{definition}
\newtheorem{theorem}{Theorem}

\newtheorem{ex}{Example}

\newtheorem{definition}{Definition}

\newtheorem{lemma}{Lemma}

\newtheorem{prob}{Problem}

\AtEndEnvironment{ex}{\hfill$\bullet$}

\usepackage[acronym]{glossaries}
\newacronym{LTI}{LTI}{linear time-invariant}
\newacronym{DeePC}{DeePC}{Data-enabled Predictive Control}
\newacronym{SPC}{SPC}{Subspace Predictive Control}

\begin{document}
\maketitle
\thispagestyle{empty}
\pagestyle{empty}

\begin{abstract}
    In this work we examine the problem of data-driven prediction. That is, given a \gls{LTI} system with unknown dynamics, we wish to use data collected from the system to predict the system's output response to a given sequence of known inputs. Current methods for predicting require strong conditions on the data such as persistency of excitation. We examine this problem with the goal of finding weaker conditions that still enable prediction. We approach the problem from the data informativity perspective and formally define the notion of informativity for unique prediction. We provide sufficient conditions for informativity for unique prediction and design algorithms to compute the unique output trajectory of the unknown system given known inputs. We demonstrate the results with a numerical example showing that unique output prediction is possible without being able to uniquely identify the unknown data-generating system.
\end{abstract}

\section{Introduction}
Predicting the behavior of dynamical systems from data is a fundamental problem spanning control~\cite{hou2013datadriven}, system identification~\cite{ljung1998system}, machine learning~\cite{vapnik2013nature}, and signal processing~\cite{markovsky2021behavioral}. The prediction problem is to determine future output trajectories of an unknown dynamical system given known future inputs. In control applications, prediction plays a central role in decision-making, enabling optimization over future trajectories and lies at the core of modern data-driven control.

Approaches such as data-driven simulation~\cite{data_driven_simulation}, \gls{DeePC}~\cite{DeePC}, and \gls{SPC}~\cite{favoreel1999spc} demonstrate that accurate predictions can be obtained directly from measured trajectories without explicitly identifying a model of the underlying system. These methods build on the behavioral representation of dynamical systems~\cite{poldermanBook} and use collected data to construct predictors. However, their guarantees rely on the data being \emph{persistently exciting}~\cite{willems2005note}, meaning that the dataset is sufficiently rich to uniquely identify the system dynamics.
While persistent excitation provides a powerful and general foundation for data-driven prediction, obtaining such data may be impractical in many applications. Generating sufficiently rich excitation signals may be costly, time-consuming, or incompatible with safety and operational constraints. When the available data are not persistently exciting, existing prediction methods such as data-driven simulation, \gls{DeePC}, and \gls{SPC} generally lose their theoretical guarantees and may fail to produce reliable predictions.

Importantly, the absence of persistently exciting data does not necessarily prevent solving control problems. In many situations, useful system properties can still be inferred from limited data even when the underlying dynamics cannot be uniquely identified. This raises a fundamental question:
\emph{When does a dataset uniquely determine the future output response of an unknown system to future inputs, even when the system itself cannot be uniquely identified?}
The informativity framework~\cite{informativity} provides a natural tool to study such questions by characterizing what properties of a dynamical system can be inferred from data. Rather than identifying a single model, this framework considers the set of all systems that are consistent with the observed data.

In this paper, we study the data-driven prediction problem from the informativity perspective. Specifically, we introduce the notion of informativity for unique prediction, and investigate conditions under which the available data allow for \emph{unique prediction} of future output trajectories. Our approach characterizes the set of \emph{explaining systems}, i.e., systems consistent with the collected data, and derives conditions under which all explaining systems produce identical output responses for a given future input sequence.

The contributions of this paper are as follows. First, we provide a necessary condition for explaining systems to yield unique output predictions (Lemma~\ref{lemma:lagAndTini}). Second, we characterize the set of trajectories shared by all explaining systems (Lemma~\ref{lemma:image_of_data}). Third, we derive sufficient conditions on the data under which unique prediction is possible (Lemma~\ref{lemma:ini_and_data_in_use} and Theorem~\ref{main_theorem}) and provide algorithms for computing the corresponding predictions (Algorithm~\ref{alg:predict} and Algorithm~\ref{alg:weaving}).

In Section~\ref{sec:prelim} we present preliminaries on \gls{LTI} behaviors. Section~\ref{sec:problem_statement} formalizes the data-driven prediction problem. In Section~\ref{sec:informativity_for_prediction} we provide sufficient conditions on the data under which the data are informative for unique prediction and develop algorithms which can compute the unique prediction. Section~\ref{sec:numerical_example} provides a numerical example that demonstrates the algorithms provided in the previous section. In Section~\ref{sec:future_work} we conclude, outlining directions for future work.

\noindent\textit{Notation:} Denote the set of matrices with $m$ rows and $n$ columns as $\mathbb{R}^{m\times n}$ and the $m\times n$ zero matrix with all zero entries is denoted by $0_{m\times n}$. We write $M\in\R^{m\times \bullet}$ when the number of columns is clear from context. The set $\wholes$ refers to the set of non-negative integers and the set $(\mathbb{R}^q)^{\wholes}$ is the set of all mappings from $\wholes$ to $\mathbb{R}^q$. Void matrices are matrices with 0 rows or 0 columns. For $r,s,t\in\wholes$ we denote the $0 \times r$ and $s \times 0$ void matrices as $0_{0 \times r}$ and $0_{s \times 0}$, respectively. Matrix multiplication extends to void matrices by following these rules for all $P\in\mathbb{R}^{r \times t}, Q\in\mathbb{R}^{t \times s}$: (i) $0_{0 \times r}P = 0_{0 \times t}$, (ii) $Q0_{s \times 0} = 0_{t \times 0}$, (iii) and $0_{s \times 0}0_{0 \times r} = 0_{s \times r}$.
The image of any void matrix is defined to be the set containing only the $0$-vector of appropriate size. Additionally, the rank of any void matrix is defined as 0.

\section{Preliminaries}\label{sec:prelim}
Consider the \gls{LTI} system
\begin{equation}\label{eq:state_space}
    \begin{aligned}
        x(t + 1) &= Ax(t) + Bu(t),\\
        y(t) &= Cx(t) + Du(t),
    \end{aligned}
\end{equation}
where $A \in \mathbb{R}^{n \times n}, B \in \mathbb{R}^{n\times m}, C \in \mathbb{R}^{p\times n}, D \in \mathbb{R}^{p\times m}$, $x(t)\in\Rn$ is the state, $u(t)\in\Rm$ the input, and $y(t)\in\Rp$ the output at time $t\in\wholes$. The \emph{input-output behavior} of~\eqref{eq:state_space} is defined as
\begin{equation}\label{eq:behavior}
    \begin{split}
    \B = \{(u, y)\in(\Rm)^{\wholes}\times (\Rp)^{\wholes}\mid\exists\, x\in(\Rn)^{\wholes}\\\suchthat\eqref{eq:state_space}\text{ holds }\forall t\in\wholes\}.
    \end{split}
\end{equation}
We call~\eqref{eq:state_space} a \emph{state-space representation} of $\B$. Every behavior $\B$ admits multiple state-space representations. We define $n(\B)$ as the minimal state dimension across all state-space representations of $\B$ and call such a representation a \emph{minimal representation} of $\B$. Denote the set of all input-output behaviors of \gls{LTI} systems with $m$ inputs and $p$ outputs by $\LTIq$.

For $t_0, t_1 \in \wholes$ with $t_1 \geq t_0$, define $[t_0, t_1] := \setBuild{t \in \wholes}{t_0 \leq t \leq t_1}$. For $w\in(\Rq)^{\wholes}$, denote $w_{[t_0, t_1]} := (w(t_0), \dots, w(t_1))\in \mathbb{R}^{q\left(t_1 - t_0 + 1\right)}$
as the restriction of $w$ to the interval $[t_0, t_1]$. The \emph{restricted behavior} of an \gls{LTI} system on the interval $[t_0, t_1]$ is defined as $\mathcal{B}_{[t_0, t_1]} :=\setBuild{(u_{[t_0, t_1]}, y_{[t_0, t_1]})}{(u, y)\in\mathcal{B}}$.
We call elements of $\B_{[t_0, t_1]}$ \emph{input-output trajectories} or simply \emph{trajectories}. The restricted behavior $\B_{[t_0, t_1]}$ is a linear subspace of dimension $m(t_1-t_0+1)+n(\B)$~\cite{markovsky2021behavioral}.

We define the \emph{observability matrix} of depth $k$, denoted $\mathcal{O}_k$, as a $0\times n$ void matrix when $k = 0$ and
\[
    \mathcal{O}_ k=
    \begin{pmatrix}
        C\\
        CA\\
        \vdots\\
        CA^{k - 1}
    \end{pmatrix}
\]
for $k > 0$. The lag of an \gls{LTI} behavior $\B$ with minimal state-space representation~\eqref{eq:state_space} is defined as
\begin{equation}\label{lag}
    \ell(\mathcal{B})=\min\setBuild{k}{\rank\mathcal{O}_k = \rank\mathcal{O}_{k + 1}}.
\end{equation}
We present a result that \emph{weaves} two trajectories of a system if they coincide on a long enough interval.
\begin{lemma}[{\cite[Lemma 3]{weaving}}]\label{lemma:weaving}
    Let $\B\in\LTIq$, $T_1,T_2\in\wholes$, $\ell\in\wholes$. Consider $(u^1,y^1)\in\B_{[0,T_1-1]}$, $(u^2,y^2)\in\B_{[0,T_2-1]}$ such that
    \[
    u^1_{[T_1-\ell,T_1-1]}=u^2_{[0,\ell-1]},\text{ and }
    y^1_{[T_1-\ell,T_1-1]}=y^2_{[0,\ell-1]}.
    \]
    If $\ell\geq\ell(\B)$ then
    \[
        u = (u^1, u^2_{[\ell, T_2 - 1]}), \text{ and } y = (y^1, y^2_{[\ell, T_2 - 1]})
    \]
    are such that $(u,y)\in\B_{[0,T_1+T_2-\ell-1]}$.
\end{lemma}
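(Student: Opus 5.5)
We work with a minimal state-space representation $(A,B,C,D)$ of $\B$ with state dimension $n=n(\B)$, and use the fact that trajectories of $\B$ on an interval are exactly the restrictions of state-space solutions. The idea is to show that the state at the end of $(u^1,y^1)$ coincides with the state reached by $(u^2,y^2)$ after $\ell$ steps. Then the second trajectory can be appended after the first.

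Write $(u^1,y^1)$ as generated by some initial state $x^1(0)$, and $(u^2,y^2)$ by $x^2(0)$. Let $\xi := x^1(T_1-\ell)$ be the state of the first trajectory at the start of the overlap window. On the window $[T_1-\ell,T_1-1]$ we have
\[
y^1_{[T_1-\ell,T_1-1]} = \mathcal{O}_\ell\,\xi + \mathcal{T}_\ell\, u^1_{[T_1-\ell,T_1-1]}, \qquad y^2_{[0,\ell-1]} = \mathcal{O}_\ell\, x^2(0) + \mathcal{T}_\ell\, u^2_{[0,\ell-1]},
\]
where $\mathcal{T}_\ell$ is the lower block-triangular Toeplitz matrix of Markov parameters. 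Because the inputs and outputs on the overlap agree, subtracting gives $\mathcal{O}_\ell(\xi - x^2(0))=0$. We will not show $\xi = x^2(0)$, which may fail. Instead we aim to show that the states after the window coincide:
\[
x^1(T_1) = A^\ell \xi + \mathcal{C}_\ell u, \qquad x^2(\ell) = A^\ell x^2(0) + \mathcal{C}_\ell u,
\]
with the same input term $\mathcal{C}_\ell u$. So it suffices to prove $\ker \mathcal{O}_\ell \subseteq \ker A^\ell$ whenever $\ell\geq \ell(\B)$.

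This is the key step. By minimality, $(C,A)$ is observable. Since the ranks of $\mathcal{O}_k$ are nondecreasing and stabilize once two consecutive ones agree (a standard Cayley--Hamilton / kernel-chain argument), the definition of lag gives $\rank \mathcal{O}_{\ell(\B)} = \rank\mathcal{O}_n = n$. Hence $\ker\mathcal{O}_\ell=\{0\}$ for every $\ell\ge \ell(\B)$, which forces $\xi = x^2(0)$ and so $x^1(T_1)=x^2(\ell)$.

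The stabilization claim is the one point needing care. If $\ker\mathcal{O}_k=\ker\mathcal{O}_{k+1}$, then $v\in\ker\mathcal{O}_{k+1}$ implies $Av\in\ker\mathcal{O}_k=\ker\mathcal{O}_{k+1}$, so $v\in\ker\mathcal{O}_{k+2}$. Induction then shows the kernels are constant from $k$ on. Observability makes this constant kernel trivial. The edge case $\ell=0$ occurs only when $n=0$, where the state argument is vacuous. We also assume, as implicit in the statement, that $\ell\le \min(T_1,T_2)$.

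Finally, define the state sequence $x(t)=x^1(t)$ for $t\in[0,T_1]$ and $x(t)=x^2(t-T_1+\ell)$ for $t\in[T_1,T_1+T_2-\ell]$. These agree at $t=T_1$. With the concatenated $u$ and $y$, equations~\eqref{eq:state_space} hold on each piece because each piece is a solution. The trajectory then extends to all of $\wholes$ by choosing arbitrary future inputs. Therefore $(u,y)\in\B_{[0,T_1+T_2-\ell-1]}$.
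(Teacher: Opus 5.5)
Your proof is correct. The paper gives no proof of its own and imports the lemma from \cite{weaving}; your argument is the standard state-matching one: in a minimal (hence observable) realization, $\ker\mathcal{O}_\ell=\{0\}$ for $\ell\geq\ell(\B)$, so the overlap window pins down the state and the two state sequences can be glued at time $T_1$. The only blemish is expository: your early remark that $\xi=x^2(0)$ ``may fail'' is contradicted by your own key step (it can fail only in a non-minimal realization), so you can drop the detour through $\ker\mathcal{O}_\ell\subseteq\ker A^\ell$.
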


Let $w\in(\Rq)^{\wholes}$, $T\in\wholes$, we define the \emph{depth $k$ Hankel matrix} associated with $w_{[0,T-1]}$ as $\genHank{k}{w}{[0, T - 1]} = 0_{qk\times0}$ if $T < k$ and
\begin{equation*}
    \genHank{k}{w}{[0, T - 1]} = 
    \begin{pmatrix}
        w(0) & w(1) &\cdots & w(T - k)\\
        w(1) & w(2) &\cdots & w(T - k + 1)\\
        \vdots & \vdots & \ddots & \vdots\\
        w(k - 1) & w(k) & \cdots & w(T - 1)
    \end{pmatrix}
\end{equation*}
for $T \geq k$. When building Hankel matrices from data trajectories, we commonly refer to them as \emph{data matrices}.
We define the model class of \gls{LTI} systems with lag at most $L\in\wholes$ and order at most $N\in\wholes$ by
\begin{align*}
    \mathcal{M} = \setBuild{\mathcal{B}\in\LTIq}{\ell(\mathcal{B})\leq L, n(\mathcal{B})\leq N}.
\end{align*}
Given an input-output trajectory of length $T\in\wholes$, $(\bar{u}, \bar{y})\in\R^{(m+p)T}$, we denote the set of systems within the model class $\M$ that could have generated this trajectory as
\begin{align*}
    \Sigma(\bar{u}, \bar{y}) = \setBuild{\mathcal{B}\in\mathcal{M}}{(\bar{u}, \bar{y})\in\mathcal{B}_{[0, T - 1]}}.
\end{align*}
We say the input-output trajectory $(\bar{u},\bar{y})$ is \emph{informative for system identification} if $\Sigma(\bar{u}, \bar{y})$ is a singleton, i.e., there is only one behavior that could have generated $(\bar{u}, \bar{y})$~\cite{camlibel_beyond_2024}.

\section{Problem Statement}\label{sec:problem_statement}
Consider an \gls{LTI} system with unknown behavior $\mathcal{B}^*\in\LTIq$. This system will be referred to as the \emph{true system}. Given
\begin{itemize}
    \item a data trajectory $(u^\data, y^\data)\in\mathcal{B}^*_{[0, T - 1]}$,
    \item an initial trajectory $(u^\ini, y^\ini)\in\mathcal{B}^*_{[0, \Tini - 1]}$ of length $\Tini$,
    \item a sequence of $\Tf$ future inputs $u^\f\in\mathbb{R}^{m\Tf}$,
    \item an upper bound $L\in\wholes$ on the lag $\ell(\B^*)$, and
    \item an upper bound $N\in\wholes$ on the order $n(\B^*)$,
\end{itemize}
we seek to find a unique future response $y^\f\in\mathbb{R}^{p\Tf}$ such that $(u^\ini, u^\f, y^\ini, y^\f)\in\mathcal{B}^*_{[0, \Tini + \Tf - 1]}$. 
If the data is informative for system identification, then unique output response can be computed using methods from~\cite{data_driven_simulation}. However, informativity for sytem identification is not always necessary to uniquely predict future output responses as outlined in the following example.
For ease of notation, we write $\Sigma_\data = \Sigma(u^\data, y^\data)$ and $\Sigma_\ini = \Sigma(u^\ini, y^\ini)$.
\begin{ex}\label{ex:predict-without-ID}
    Consider the \gls{LTI} system $\B^*$ with state-space representation
    \begin{equation}\label{eq:sys1}
        \begin{aligned}
            x(t + 1) &= x(t) + u(t),\\
            y(t) &= x(t),
        \end{aligned}
    \end{equation}
    where $n=m=p=1$. Take $L=1$ and $N=1$. Clearly $\ell(\B^*)\leq L$ and $n(\B^*)\leq N$. Let $T=3$ and consider the data trajectory $(u^\data,y^\data)\in\B^*_{[0,T-1]}$ with 
    \begin{equation}\label{eq:data}
    \begin{aligned}
    (u^\data(0),u^\data(1),u^\data(2))&= (1, -1, 1), \\
    (y^\data(0),y^\data(1),y^\data(2))&= (0, 1, 0).
    \end{aligned}
    \end{equation}
    Let $\Tini=1$. Consider the initial trajectory $(u^\ini,y^\ini)\in\B^*_{[0,\Tini-1]}$ with $u^\ini = -2$ and $y^\ini = 1$. Let $\Tf=2$. Consider the future inputs $u^\f(0) = 2, u^\f(1)= -2$. We can explicitly compute the sets of explaining systems as
    \begin{equation*}
    \Sigma_\data\cap\Sigma_\ini=\left\{ \B \;\middle|\;
{\small\begin{array}{@{}l@{}}
   \exists r\in\R\text{ such that } \forall(u,y)\in\B,\;\forall t\in\wholes,\\
   y(t + 1) - y(t) = ru(t + 1) + (r + 1)u(t)
\end{array}}
\right\}.
    \end{equation*}
    Clearly, $(u^\data,y^\data)$ is not informative for system identification as there are many other systems consistent with the data. Therefore, on the basis of $(u^\data, y^\data)$ and $(u^\ini, y^\ini)$, we cannot differentiate between any of the explaining systems. Thus, we must find a unique output response $y^\f$ such that $(u^\ini, u^\f, y^\ini, y^\f)\in\mathcal{B}_{[0, \Tini + \Tf - 1]}$ for all $\B\in\Sigma_\data\cap\Sigma_\ini$. Indeed, $y^\f(0) = -1$, $y^\f(1)=1$ is an output response that is consistent with all explaining systems. We can prove that indeed this $y^\f$ is the unique output for all explaining systems and it can be obtained directly from the data, without explicitly computing the set of explaining systems $\Sigma_\data\cap\Sigma_\ini$ (see Theorem~\ref{main_theorem}).
\end{ex}

Motivated by Example~\ref{ex:predict-without-ID} and inspired by the informativity framework~\cite{informativity}, we define a notion of informativty for unique prediction. 
\begin{definition}\label{def_informativity}
    Let $T,\Tini, \Tf\in\wholes$, $(u^\data, y^\data)\in\mathcal{B}^*_{[0, T - 1]}$, and $(u^\ini, y^\ini)\in\mathcal{B}^*_{[0, \Tini - 1]}$, $u^\f\in\mathbb{R}^{m\Tf}$. We say the data $(u^\data, y^\data)$ is \emph{informative for unique prediction of} $u^
    \f$ \emph{from} $(u^\ini, y^\ini)$ if
    \begin{enumerate}[(i)]
        \item\label{def:i} for every $\mathcal{B}\in\Sigma_\data\cap\Sigma_\ini$ there exists a unique $y^\f\in\mathbb{R}^{p\Tf}$ such that $(u^\ini, u^\f, y^\ini, y^\f)\in\mathcal{B}_{[0, \Tini + \Tf - 1]}$, and
        \item\label{def:ii} there exists a $y^\f\in\mathbb{R}^{p\Tf}$ such that $(u^\ini, u^\f, y^\ini, y^\f)\in\mathcal{B}_{[0, \Tini + \Tf - 1]}$ for all $\mathcal{B}\in\Sigma_\data\cap\Sigma_\ini$.
    \end{enumerate}
\end{definition}

Condition (\ref{def:i}) asks that every explaining system have their own unique response to $u^\f$. On the other hand, condition (\ref{def:ii}) asks that every explaining system have a response that they all agree on. We highlight the importance of both conditions through an example.
\begin{ex}
    Consider the \gls{LTI} system $\B^*$ with state-space representation
    \begin{align*}
        x(t + 1) &=
        \begin{pmatrix}
            0 & 1\\
            1 & 0
        \end{pmatrix}
        x(t) + 
        \begin{pmatrix}
            1\\
            1
        \end{pmatrix}
        u(t),\\
        y(t) &=
        \begin{pmatrix}
            0 & 1\\
        \end{pmatrix}
        x(t),
    \end{align*}
    where $m=p=1$ and $n=2$. Take $L = 2$ and $N = 2$. Clearly $\ell(\B^*)\leq L$ and $n(\B^*)\leq N$. Let $T=2$ and consider the data trajectory $(u^\data,y^\data)\in\B^*_{[0,T-1]}$ with $u^\data(0)=u^\data(1) =0$ and $y^\data(0)=y^\data(1) = 0$. Let $\Tini=1$. Consider the initial trajectory $(u^\ini,y^\ini)\in\B^*_{[0,\Tini-1]}$ with $u^\ini(0) = 0$ and $y^\ini(0) = 0$. Let $\Tf=1$. Consider the future input $u^\f=0$. Note that $(u^\data,y^\data)\in\B_{[0,1]}$ and $(u^\ini,y^\ini)\in\B_{[0,0]}$ for all $\mathcal{B}\in\mathcal{M}$ by linearity. Hence, $\Sigma_\data = \Sigma_\ini = \mathcal{M}$. We see that (\ref{def:i}) does not hold for this example by finding two different outputs $y^\f$ that the true system $\B^*$ could have as responses to $u^\f$ depending on the initial state. If the initial state is $x(0) = (0, 0)$ then $y^\f = 0$ is the output of $\B^*$, but if $x(0) = (1, 0)$ then $y^\f = 1$ is the output of $\B^*$. On the basis of the initial trajectory $(u^\ini,y^\ini)$, it is not possible to distinguish between these two cases. Despite this, condition (\ref{def:ii}) holds, as the system $\B = (\Rm)^{\wholes}\times\{0\}^{\wholes}\in\Sigma_\data\cap\Sigma_\ini=\M$ is an explaining system. Furthermore, the only possible response of this $\B$ to $u^\f$ is $y^\f = 0$, and since every \gls{LTI} system contains the zero trajectory by linearity, $y^\f = 0$ is the only $y^\f$ satisfying condition (\ref{def:ii}). Therefore, only satisfying (\ref{def:ii}) is not sufficient for uniquely predicting $y^\f$. On the other hand if (\ref{def:i}) holds, but (\ref{def:ii}) fails then there is no trajectory that all explaining systems agree on and therefore we cannot predict which will be the future response $y^\f$ of the true system $\B^*$
\end{ex}
We now pose the problem addressed in this paper.

\begin{prob}\label{problem}
    Find sufficient conditions on the data $(u^\data,y^\data)$, initial trajectory $(u^\ini, y^\ini)$, future inputs $u^\f$, upper bounds $L$, and $N$ such that \inform.
\end{prob}

\section{Informativity for Prediction}\label{sec:informativity_for_prediction}
We first present a necessary condition for the uniqueness of the predicted output required in Definition~\ref{def_informativity} condition (\ref{def:i}).
\begin{lemma}\label{lemma:lagAndTini}
    Let $\B\in\LTIq$ with state space realization~\eqref{eq:state_space}. Let $\Tini, \Tf \in \wholes$. Suppose $(u^\ini, y^\ini) \in \mathcal{B}_{[0, \Tini - 1]}, u^\f \in \mathbb{R}^{m\Tf}$. Then there exists a unique $y^\f$ such that $(u^\ini, u^\f, y^\ini, y^\f) \in \mathcal{B}_{[0, \Tini + \Tf - 1]}$ if and only if $\Tini \geq \ell(\mathcal{B})$.
\end{lemma}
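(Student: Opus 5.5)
Since both sides of the equivalence depend only on $\B$ and not on the chosen realization, I would work throughout with a minimal state-space representation~\eqref{eq:state_space} of $\B$. This is the representation in which $\ell(\B)$ is defined in~\eqref{lag}, and it is observable. I will also assume $\Tf\ge 1$. For $\Tf=0$ the response $y^\f$ is the void vector and is trivially unique, so the ``only if'' direction can only be meant for $\Tf\geq 1$.

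\emph{Reducing uniqueness to a kernel condition.} First, existence. Since $(u^\ini,y^\ini)\in\B_{[0,\Tini-1]}$, there is an initial state $x_0$ generating it. Running~\eqref{eq:state_space} from $x_0$ with input $(u^\ini,u^\f)$ gives some $y^\f$ with $(u^\ini,u^\f,y^\ini,y^\f)\in\B_{[0,\Tini+\Tf-1]}$.

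Next, compare two such responses $y^\f_1,y^\f_2$, generated from initial states $x_1,x_2$. Both trajectories have the same input, so by linearity their difference is a zero-input trajectory with initial state $\delta=x_1-x_2$. Its output on $[0,\Tini-1]$ is $\mathcal{O}_{\Tini}\delta=0$, and on the future window it is $y^\f_1-y^\f_2=\mathcal{O}_{\Tf}A^{\Tini}\delta$. Conversely, for any $\delta\in\ker\mathcal{O}_{\Tini}$, the initial state $x_0+\delta$ reproduces $(u^\ini,y^\ini)$ and changes the future output by $\mathcal{O}_{\Tf}A^{\Tini}\delta$. Since $\mathcal{O}_{\Tini+\Tf}$ is $\mathcal{O}_{\Tini}$ stacked on $\mathcal{O}_{\Tf}A^{\Tini}$, this gives
\[
\text{$y^\f$ is unique} \iff \ker\mathcal{O}_{\Tini}=\ker\mathcal{O}_{\Tini+\Tf}.
\]

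\emph{Rank behaviour of $\mathcal{O}_k$.} Here I would use the standard stabilization argument. The kernels $\ker\mathcal{O}_k$ are nested and decreasing in $k$. If $\rank\mathcal{O}_k=\rank\mathcal{O}_{k+1}$, then $\ker\mathcal{O}_k=\ker\mathcal{O}_{k+1}$. Because $\ker\mathcal{O}_{k+1}=\ker C\cap A^{-1}\ker\mathcal{O}_k$, this common kernel is $A$-invariant, and by induction $\ker\mathcal{O}_j=\ker\mathcal{O}_k$ for all $j\ge k$. Hence the rank strictly increases for $k<\ell(\B)$ and is constant for $k\ge\ell(\B)$. By observability of the minimal realization, $\rank\mathcal{O}_{\ell(\B)}=n$, so $\ker\mathcal{O}_{\Tini}=\{0\}$ whenever $\Tini\ge\ell(\B)$.

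\emph{Both directions.} If $\Tini\ge\ell(\B)$, then $\ker\mathcal{O}_{\Tini}=\ker\mathcal{O}_{\Tini+\Tf}=\{0\}$, so $y^\f$ is unique. If $\Tini<\ell(\B)$, strict rank increase gives
\[
\ker\mathcal{O}_{\Tini+\Tf}\subseteq\ker\mathcal{O}_{\Tini+1}\subsetneq\ker\mathcal{O}_{\Tini},
\]
where the first inclusion uses $\Tf\ge1$. Choosing $\delta\in\ker\mathcal{O}_{\Tini}\setminus\ker\mathcal{O}_{\Tini+1}$ makes $CA^{\Tini}\delta\neq 0$. The initial states $x_0$ and $x_0+\delta$ then produce two distinct admissible responses, so $y^\f$ is not unique.

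The main obstacle is the kernel-stabilization step, in particular the $A$-invariance argument. The rest is linear bookkeeping, together with keeping track of the void-matrix conventions when $\Tini=0$, where $\mathcal{O}_0$ has kernel $\Rn$.
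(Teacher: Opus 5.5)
Your proof is correct. It agrees with the paper on the ``only if'' direction and differs on the ``if'' direction.

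For ``only if'', you do what the paper does. You pick $\delta\in\ker\mathcal{O}_{\Tini}\setminus\ker\mathcal{O}_{\Tini+1}$, shift the initial state by $\delta$, and observe that the future outputs differ in their first block by $CA^{\Tini}\delta\neq 0$. The paper phrases this as a contradiction, you as a contrapositive.

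For ``if'', the paper gives no argument and simply cites Proposition~6 of Markovsky and Rapisarda. You instead derive it from your criterion that $y^\f$ is unique if and only if $\ker\mathcal{O}_{\Tini}=\ker\mathcal{O}_{\Tini+\Tf}$, combined with rank stabilization and observability of a minimal realization. This makes the lemma self-contained, treats existence and both directions uniformly, and makes clear why minimality matters.

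Your route also exposes the hypothesis $\Tf\geq 1$, which the paper's argument uses silently. When $\Tf=0$, the identity $\mathcal{O}_{\Tf}A^{\Tini}v=0$ is vacuous and gives no contradiction. So, as you note, the ``only if'' direction actually fails when $\Tf=0$ and $\Tini<\ell(\B)$.

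Two minor simplifications:
\begin{enumerate}
\item The strict increase of $\rank\mathcal{O}_k$ for $k<\ell(\B)$ follows immediately from $\ell(\B)$ being a minimum. The stabilization argument is therefore needed only for the ``if'' direction.
\item The $A$-invariance step is unnecessary. The recursion $\ker\mathcal{O}_{k+1}=\ker C\cap A^{-1}\ker\mathcal{O}_k$ on its own carries $\ker\mathcal{O}_k=\ker\mathcal{O}_{k+1}$ to all larger indices.
\end{enumerate}
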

\begin{proof}
    $(\Leftarrow):$ This direction is proved in~\cite[Proposition 6]{data_driven_simulation}. $(\Rightarrow):$ Assume there exists a unique $y^\f$ such that $(u^\ini, u^\f, y^\ini, y^\f) \in \mathcal{B}_{[0, \Tini + \Tf - 1]}$. Assume towards a contradiction that $\Tini < \ell(\mathcal{B})$. By definition of lag~\eqref{lag}, we have that $\dim\ker \mathcal{O}_{\Tini} > \dim\ker\mathcal{O}_{\Tini + 1}$. Since $(u^\ini, y^\ini) \in \mathcal{B}_{[0, \Tini - 1]}$, there exist initial states $x^\ini, z^\ini \in \Rn$ and $v \in \ker\mathcal{O}_{\Tini}\setminus\ker\mathcal{O}_{\Tini + 1}$ such that $z^\ini = x^\ini + v$ and
    \begin{align*}
        y^\ini &= \mathcal{O}_{\Tini}x^\ini + \mathcal{T}_{\Tini}u^\ini,\\
        y^\ini &= \mathcal{O}_{\Tini}z^\ini + \mathcal{T}_{\Tini}u^\ini.
    \end{align*}
    We propagate the system forward for both initial states using $u^\f$ to obtain $y^\f$. By uniqueness of $y^\f$,
    \begin{align*}
       y^\f=\mathcal{O}_{\Tf}x^\f + \mathcal{T}_{\Tf}u^\f=\mathcal{O}_{\Tf}z^\f + \mathcal{T}_{\Tf}u^\f
    \end{align*}
    where
    \[
    \begin{aligned}
    x^\f &= A^{\Tini}x^\ini+\begin{pmatrix}
            A^{\Tini - 1}B & \cdots & AB & B
        \end{pmatrix}
        u^\ini, \\
        z^\f &= A^{\Tini}z^\ini +
        \begin{pmatrix}
            A^{\Tini - 1}B &  \cdots & AB & B
        \end{pmatrix}
        u^\ini.
 \end{aligned}
 \]
   Hence, $\mathcal{O}_{\Tf}A^{\Tini}v=0$. However, this contradicts the fact that $CA^{\Tini} v$ is nonzero by definition of $v$, proving the result. 
   \end{proof}

As a consequence of Lemma~\ref{lemma:lagAndTini}, informativity for unique prediction of any $u^\f$ from $(u^\ini,y^\ini)$ requires the initial data length $\Tini$ to be greater than or equal to the lag of every explaining system in $\Sigma_\data\cap\Sigma_\ini$. In particular, $\Tini$ must be larger than or equal to the maximum lag of any explaining system, which we note by  $\Lmax := \setBuild{\ell(\mathcal{B})}{\mathcal{B}\in\Sigma_\data\cap\Sigma_\ini}$. We formalize this in the following lemma.

\begin{lemma}\label{lemma:MinimumTini}
    \assumptions Suppose \inform. Then, $\Tini \geq \Lmax$.
\end{lemma}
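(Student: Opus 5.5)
The claim follows by applying Lemma~\ref{lemma:lagAndTini} to each explaining system and then passing to the maximum. Here $\Lmax$ is read as the maximum of $\setBuild{\ell(\mathcal{B})}{\mathcal{B}\in\Sigma_\data\cap\Sigma_\ini}$; this is well defined because every lag in the model class is bounded by $L$, so the set is a nonempty finite set of integers. It is nonempty since the true system satisfies $\B^*\in\Sigma_\data\cap\Sigma_\ini$.

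First I fix an arbitrary $\B\in\Sigma_\data\cap\Sigma_\ini$. By definition of $\Sigma_\ini$ we have $(u^\ini,y^\ini)\in\B_{[0,\Tini-1]}$, and $\B\in\LTIq$. Condition~(\ref{def:i}) of Definition~\ref{def_informativity} then gives a unique $y^\f$ with $(u^\ini,u^\f,y^\ini,y^\f)\in\B_{[0,\Tini+\Tf-1]}$. These are exactly the hypotheses of Lemma~\ref{lemma:lagAndTini} once $\B$ is given a minimal state-space realization. The direction $(\Rightarrow)$ of that lemma yields $\Tini\geq\ell(\B)$.

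Since $\B$ was arbitrary, $\Tini$ is an upper bound for the set of lags of all explaining systems. Hence $\Tini$ is at least its maximum, that is, $\Tini\geq\Lmax$.

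There is no real obstacle. The only point needing care is that the lag in Lemma~\ref{lemma:lagAndTini} is defined through a minimal realization, so the realization used when invoking it should be minimal. The argument never needs the agreement condition~(\ref{def:ii}); condition~(\ref{def:i}) alone suffices.
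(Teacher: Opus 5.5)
Your proof is correct and is essentially the paper's argument. The paper assumes $\Tini<\Lmax$, picks an explaining system with $\ell(\B)>\Tini$, and invokes Lemma~\ref{lemma:lagAndTini} to contradict condition~(\ref{def:i}) of Definition~\ref{def_informativity}; you apply the $(\Rightarrow)$ direction of the same lemma to every explaining system and take the maximum, which is the same reasoning in direct form and skips the paper's unneeded reference to the response of the true system. Like the paper, you tacitly need $\Tf\geq 1$: for $\Tf=0$ the empty response is trivially unique, so the $(\Rightarrow)$ direction of Lemma~\ref{lemma:lagAndTini}, and with it the present statement, can fail.
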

\begin{proof}
    Let $y^\f$ be the unique output such that $(u^\ini, u^\f, y^\ini, y^\f) \in \mathcal{B}^*_{[0, \Tini + \Tf - 1]}$. Assume by contradiction that $\Tini < \Lmax $. Then, there exists a system $\B \in \Sigma_\data\cap\Sigma_\ini$ with lag $\ell(\mathcal{B}) > \Tini$. By Lemma \ref{lemma:lagAndTini}, there must exist some $\tilde{y}^\f\in\mathbb{R}^{p\Tf}$ such that $\tilde{y}^\f\neq y^\f$ so that $(u^\ini, u^\f, y^\ini, \tilde{y}^\f)\in\mathcal{B}_{[0, \Tini + \Tf - 1]}$, but this directly contradicts condition (\ref{def:i}) in Definition~\ref{def_informativity}. Therefore, $\Tini \geq \Lmax$. 
\end{proof}
This lemma allows us to state an equivalent set of conditions for informativity for unique prediction.
\begin{lemma}\label{lem:alt_definition}
    \assumptions Then \inform \ if and only if
    \begin{enumerate}[(i)]
        \item\label{cor:i} $\Tini\geq\Lmax$, and
        \item\label{cor:ii} there exists a $y^\f\in\mathbb{R}^{p\Tf}$ such that $(u^\ini, u^\f, y^\ini, y^\f)\in\mathcal{B}_{[0, \Tini + \Tf - 1]}$ for all explaining systems $\mathcal{B}\in\Sigma_\data\cap\Sigma_\ini$.
    \end{enumerate}
\end{lemma}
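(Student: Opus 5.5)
We prove the two directions separately. Condition (\ref{cor:ii}) coincides verbatim with condition (\ref{def:ii}) of Definition~\ref{def_informativity}, so the whole argument amounts to showing that, in the presence of (\ref{def:ii}), condition (\ref{def:i}) of Definition~\ref{def_informativity} is equivalent to $\Tini\geq\Lmax$. Throughout, $\Lmax$ is read as the maximum (supremum) of the set $\setBuild{\ell(\mathcal{B})}{\mathcal{B}\in\Sigma_\data\cap\Sigma_\ini}$. This maximum exists because every explaining system lies in $\M$, so all lags are bounded by $L$. The set is also nonempty, since $\B^*\in\Sigma_\data\cap\Sigma_\ini$, which holds because the data and initial trajectories were generated by $\B^*$ and $\B^*\in\M$.

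\emph{($\Rightarrow$).} Suppose \inform. Lemma~\ref{lemma:MinimumTini} applies directly and gives (\ref{cor:i}). Condition (\ref{cor:ii}) is simply condition (\ref{def:ii}) of Definition~\ref{def_informativity}, which holds by assumption.

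\emph{($\Leftarrow$).} Assume (\ref{cor:i}) and (\ref{cor:ii}). Condition (\ref{def:ii}) of Definition~\ref{def_informativity} is again immediate from (\ref{cor:ii}), so it remains to establish (\ref{def:i}). Fix any $\mathcal{B}\in\Sigma_\data\cap\Sigma_\ini$. By definition of $\Sigma_\ini$ we have $(u^\ini,y^\ini)\in\mathcal{B}_{[0,\Tini-1]}$. Moreover, $\ell(\mathcal{B})\leq\Lmax\leq\Tini$ by (\ref{cor:i}). Choosing a minimal state-space realization of $\mathcal{B}$, the ($\Leftarrow$) direction of Lemma~\ref{lemma:lagAndTini} then yields a unique $y^\f$ with $(u^\ini,u^\f,y^\ini,y^\f)\in\mathcal{B}_{[0,\Tini+\Tf-1]}$. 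Since $\mathcal{B}$ was arbitrary, this is exactly (\ref{def:i}).

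No step here is a real obstacle; the only subtlety is bookkeeping. One must interpret $\Lmax$ as a maximum rather than as the set the paper literally writes, and note that it is well defined since lags are bounded by $L$. One should also observe that existence of the response is already part of the conclusion of Lemma~\ref{lemma:lagAndTini}, so uniqueness for each system needs no separate existence argument.
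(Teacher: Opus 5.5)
Your proof is correct and follows the paper's argument. For ($\Rightarrow$) you use Lemma~\ref{lemma:MinimumTini} and note that condition (ii) is verbatim condition (ii) of Definition~\ref{def_informativity}; for ($\Leftarrow$) you apply the sufficiency direction of Lemma~\ref{lemma:lagAndTini} to each explaining system via $\ell(\B)\leq\Lmax\leq\Tini$, and your reading of $\Lmax$ as a maximum (well defined because $\B^*\in\Sigma_\data\cap\Sigma_\ini$ and all lags are bounded by $L$) correctly fills in a detail the paper leaves implicit.
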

\begin{proof}
    ($\Rightarrow$): This direction follows from Lemma \ref{lemma:MinimumTini}. ($\Leftarrow$): Since condition (\ref{cor:ii}) is identical to Definition \ref{def_informativity} condition (\ref{def:ii}), we only need to show that condition (\ref{cor:i}) implies Definition \ref{def_informativity} condition (\ref{def:i}). If condition (\ref{cor:i}) holds then by Lemma \ref{lemma:lagAndTini}, Definition \ref{def_informativity} condition (\ref{def:i}) holds, proving the claim.
\end{proof}

We now focus on developing sufficient conditions for informativity for unique prediction. Verifying $\Tini\geq \Lmax$ requires knowledge of $\Lmax$, which depends on the set of explaining systems. Computing $\Lmax$ from data is in general difficult since the set of explaining systems can be challenging to characterize. However, we only require an upper bound on $\Lmax$ to verify $\Tini\geq \Lmax$. Therefore, it would be sufficient to check if $\Tini\geq L$, where $L$ is the upper bound on the lag of the true system $\B^*$. This is in general conservative, but a tighter upper bound on $\Lmax$ can be computed from data leveraging the upper bound on the system order $N$~\cite{camlibel_beyond_2024}.

We now seek sufficient conditions for existence of a $y^\f\in\mathbb{R}^{p\Tf}$ such that $(u^\ini, u^\f, y^\ini, y^\f)\in\mathcal{B}_{[0, \Tini + \Tf - 1]}$ for all explaining systems $\mathcal{B}\in\Sigma_\data\cap\Sigma_\ini$. To do this, we first state an intermediate lemma that characterizes some of the trajectories that all explaining systems must agree on.
\begin{lemma}\label{lemma:image_of_data}
    Let $(u, y)\in\mathcal{B}^*_{[0, T - 1]}$. Then, for all $k\in[1, T]$,
    \begin{align*}
        \im
        \begin{pmatrix}
            \genHank{k}{u}{}\\
            \genHank{k}{y}{}
        \end{pmatrix}
        \subseteq\bigcap_{\mathcal{B}\in\Sigma(u,y)}\mathcal{B}_{[0, k - 1]}.
    \end{align*}
\end{lemma}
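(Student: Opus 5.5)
The plan is to show that each column of the stacked Hankel matrix lies in every $\B_{[0,k-1]}$ with $\B\in\Sigma(u,y)$. Once that holds, linearity of the restricted behavior (each $\B_{[0,k-1]}$ is a linear subspace) gives the image inclusion immediately. Fix $k\in[1,T]$ and an arbitrary $\B\in\Sigma(u,y)$, so that $(u,y)\in\B_{[0,T-1]}$ by definition of $\Sigma(u,y)$. Since $k\le T$, the Hankel matrix is not void and its columns are indexed by $j\in[0,T-k]$. The $j$-th column is $(u_{[j,j+k-1]},y_{[j,j+k-1]})$ after reordering the interleaved entries. I would state explicitly that the stacked matrix $\begin{pmatrix}\genHank{k}{u}{}\\ \genHank{k}{y}{}\end{pmatrix}$ is read with the convention $(u_{[j,j+k-1]},y_{[j,j+k-1]})$ used throughout for trajectories.

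The key step is a time-shift argument. Since $(u,y)\in\B_{[0,T-1]}$, there is an infinite trajectory $(\tilde u,\tilde y)\in\B$ extending it, with a state trajectory $x$ satisfying~\eqref{eq:state_space} for all $t$. Define the shifted signals $\hat u(t)=\tilde u(t+j)$, $\hat y(t)=\tilde y(t+j)$, $\hat x(t)=x(t+j)$. These satisfy the same state equations for all $t\in\wholes$, because the matrices $A,B,C,D$ are time-invariant. Hence $(\hat u,\hat y)\in\B$, and restricting to $[0,k-1]$ gives $(u_{[j,j+k-1]},y_{[j,j+k-1]})\in\B_{[0,k-1]}$, since $j+k-1\le T-1$. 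So every column lies in $\B_{[0,k-1]}$.

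Then any element of the image is a linear combination $\sum_j g_j\,(\text{column }j)$, which lies in $\B_{[0,k-1]}$ because that set is a subspace. Indeed, $\B$ is a linear space, being the projection of the solution set of linear equations, so its restriction is linear too. Since $\B$ was arbitrary in $\Sigma(u,y)$, the image lies in the intersection. If $\Sigma(u,y)$ were empty, the intersection would be read as the full space and the claim would be trivial; in any case $\B^*\in\Sigma(u,y)$ under the standing assumption that $\B^*\in\M$.

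I expect no real obstacle. The only points needing care are the shift-invariance step, which means choosing a state-space representation of $\B$ and shifting the state along with the signals, and the bookkeeping that $k\le T$ keeps all indices inside $[0,T-1]$.
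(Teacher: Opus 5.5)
Your proposal is correct and follows essentially the same route as the paper: show that each Hankel column, a shifted window $(u_{[j,j+k-1]},y_{[j,j+k-1]})$, lies in $\B_{[0,k-1]}$ by time-invariance, then conclude the image inclusion by linearity and intersect over $\Sigma(u,y)$. Your explicit shift of the state trajectory simply spells out the step the paper states in one line as $\B_{[t_0,t_0+k-1]}\subseteq\B_{[0,k-1]}$. The stacked column is already $(u_{[j,j+k-1]},y_{[j,j+k-1]})$ with no interleaving to undo, so your reordering remark is unnecessary but harmless.
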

\begin{proof}
    Let $k\in[1, T]$, $\mathcal{B}\in\Sigma(u,y)$. By time-invariance of $\B$, we have that
    \begin{align*}
        \begin{pmatrix}
            u_{[t_0, t_0 + k - 1]}\\
            y_{[t_0, t_0 + k - 1]}
        \end{pmatrix}
        \in\mathcal{B}_{[t_0, t_0 + k - 1]}\subseteq\mathcal{B}_{[0, k - 1]}
    \end{align*}
    for every $t_0\in[0, T - k]$. Thus, by linearity of $\B$ and definition of Hankel matrices, 
    \begin{equation}\label{eq:imHankelInB}
        \im
        \begin{pmatrix}
            \genHank{k}{u}{}\\
            \genHank{k}{y}{}
        \end{pmatrix}
        \subseteq\mathcal{B}_{[0, k - 1]}.
    \end{equation}
    Since $\B$ is arbitrary, \eqref{eq:imHankelInB} must hold for all $\B\in\Sigma(u,y)$, proving the result.
\end{proof}
This lemma states that the image of the data matrix is contained in every explaining system's restricted behavior. Thus to verify the existence of an output response that all explaining systems agree on, we can search in the subspace defined by the image of the data matrix. Inspired by~\cite{data_driven_simulation}, we present Algorithm \ref{alg:predict}, which computes an output prediction that all explaining systems agree on.
\begin{algorithm}[htb]
\caption{Data-driven Prediction}\label{alg:predict}
\begin{algorithmic}
    \Procedure{Predict}{$u^\data, y^\data, u^\ini, y^\ini, u^\f, \ell$}
    \Input $u^\data, y^\data, u^\ini, y^\ini, u^\f, \ell\in\wholes$
    \State Construct the matrices
    \[
        \begin{pmatrix}
            \Up\\
            \Uf\\
            \Yp\\
            \Yf
        \end{pmatrix}
        =
        \begin{pmatrix}
            \genHank{\ell + \Tf}{u^\data}{} & \genHank{\ell + \Tf}{u^\ini}{}\\
            \genHank{\ell + \Tf}{y^\data}{} & \genHank{\ell + \Tf}{y^\ini}{}
        \end{pmatrix}
    \]
    where $\Up\in\R^{m\ell\times \bullet}$, $\Uf\in\R^{m\Tf\times \bullet}$, $\Yp\in\R^{p\ell\times \bullet}$, and $\Yf\in\R^{p\Tf\times \bullet}$.
    \State $\bar{u}^\ini\gets u^\ini_{[\Tini - \ell, \Tini - 1]}$
    \State $\bar{y}^\ini\gets y^\ini_{[\Tini - \ell, \Tini - 1]}$
    \If{there exists $g$ such that
    \[
    \begin{pmatrix}\bar{u}^\ini\\ u^\f\\ \bar{y}^\ini\end{pmatrix}=\begin{pmatrix}\Up \\  \Uf\\ \Yp\end{pmatrix} g
    \]}
    \State \textbf{return} $y^\f = \Yf g$
    \Else
        \State \textbf{return} null
    \EndIf
    \EndProcedure
\end{algorithmic}
\end{algorithm}

The algorithm implicitly infers $m, p, \Tini, \Tf$ from its inputs. The following lemma gives a sufficient condition for informativity for unique prediction in terms of the output of Algorithm \ref{alg:predict}.

\begin{lemma}\label{lemma:ini_and_data_in_use}
    \assumptions If $T_\ini \geq \Lmax$ and \textsc{Predict}$(u^\data, y^\data, u^\ini, y^\ini, u^\f, \Lmax)$ returns a $y^\f$, then \inform \ and $y^\f$ is the unique prediction.
\end{lemma}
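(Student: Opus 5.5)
Since $\Tini\geq\Lmax$ is assumed, Lemma~\ref{lem:alt_definition} reduces the claim to its condition~(\ref{cor:ii}): we must show that the returned $y^\f$ satisfies $(u^\ini,u^\f,y^\ini,y^\f)\in\B_{[0,\Tini+\Tf-1]}$ for every $\B\in\Sigma_\data\cap\Sigma_\ini$. Uniqueness then follows from Lemma~\ref{lemma:lagAndTini}, because $\Tini\geq\Lmax\geq\ell(\B)$ for each explaining system. Hence $y^\f$ is the unique prediction.

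Fix $\B\in\Sigma_\data\cap\Sigma_\ini$ and write $\ell:=\Lmax$. The first step is to show that every column of the stacked matrix built in Algorithm~\ref{alg:predict} lies in $\B_{[0,\ell+\Tf-1]}$.

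\begin{itemize}
\item For the block built from $(u^\data,y^\data)$, this is exactly Lemma~\ref{lemma:image_of_data} applied with $k=\ell+\Tf$, since $\B\in\Sigma_\data$.
\item For the block built from $(u^\ini,y^\ini)$, the same time-invariance argument as in that lemma applies, using $\B\in\Sigma_\ini$.
\item If either Hankel matrix is void (when $T$ or $\Tini$ is less than $\ell+\Tf$), it contributes no columns and nothing needs to be checked.
\end{itemize}

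By linearity of $\B_{[0,\ell+\Tf-1]}$, the vector
\[
(\bar u^\ini,u^\f,\bar y^\ini,y^\f)=\begin{pmatrix}\Up\\ \Uf\\ \Yp\\ \Yf\end{pmatrix}g
\]
therefore belongs to $\B_{[0,\ell+\Tf-1]}$, after reordering the components into the (input, output) ordering.

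The second step is to glue this trajectory onto the initial trajectory using Lemma~\ref{lemma:weaving}. Set $(u^1,y^1)=(u^\ini,y^\ini)\in\B_{[0,\Tini-1]}$ with $T_1=\Tini$. Set $(u^2,y^2)=(\bar u^\ini,u^\f,\bar y^\ini,y^\f)$, reordered as $((\bar u^\ini,u^\f),(\bar y^\ini,y^\f))$, with $T_2=\ell+\Tf$. Since $\Tini\geq\ell$, the window $[\Tini-\ell,\Tini-1]$ is well defined. By construction, $\bar u^\ini$ and $\bar y^\ini$ are exactly the last $\ell$ samples of $u^\ini$ and $y^\ini$, so the overlap hypotheses of Lemma~\ref{lemma:weaving} hold. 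Because $\ell=\Lmax\geq\ell(\B)$, the lemma yields $(u^\ini,u^\f,y^\ini,y^\f)\in\B_{[0,\Tini+\Tf-1]}$. Since $\B$ was arbitrary, condition~(\ref{cor:ii}) holds with this $y^\f$.

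The only delicate points are bookkeeping. The first is the case $\ell=0$: here the overlap is empty, and Lemma~\ref{lemma:weaving} must be read with void blocks, i.e.\ concatenating trajectories of a lag-zero (static) system. The second is checking that the ordering convention of the Hankel stacking matches the ordering in $\B_{[0,k-1]}$. I expect the weaving step to be the main point requiring care, specifically verifying that the hypothesis $\ell\geq\ell(\B)$ holds uniformly over all explaining systems. This is precisely why the algorithm is called with $\Lmax$ rather than with the true lag.
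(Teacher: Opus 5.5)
Your proposal is correct and follows essentially the same route as the paper. Both arguments show, via Lemma~\ref{lemma:image_of_data}, that every column of the stacked data/initial Hankel matrix lies in $\B_{[0,\Lmax+\Tf-1]}$ for every explaining system (the paper phrases this as an inclusion into the intersection of subspaces instead of fixing one $\B$), and then weave the returned window onto $(u^\ini,y^\ini)$ using Lemma~\ref{lemma:weaving} with $\Lmax\geq\ell(\B)$; your explicit handling of void blocks and of uniqueness via Lemma~\ref{lemma:lagAndTini} is slightly more careful than the paper's write-up.
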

\begin{proof}
    Assume \textsc{Predict}$(u^\data, y^\data, u^\ini, y^\ini, u^\f, \Lmax)$ returns a $y^\f$. To show that \inform, we verify the conditions in Lemma~\ref{lem:alt_definition}. By assumption, $\Tini\geq\Lmax$. By Lemma~\ref{lemma:image_of_data},
    \begin{equation*}
        \im
        \begin{pmatrix}
            \genHank{\Lmax + \Tf}{u^\data}{}\\
            \genHank{\Lmax + \Tf}{y^\data}{}
        \end{pmatrix}
        \subseteq\bigcap_{\mathcal{B}\in\Sigma_\data}\mathcal{B}_{[0, \Lmax + \Tf - 1]}
    \end{equation*}
    and
    \begin{equation*}
        \im
        \begin{pmatrix}
            \genHank{\Lmax + \Tf}{u^\ini}{}\\
            \genHank{\Lmax + \Tf}{y^\ini}{}
        \end{pmatrix}
        \subseteq\bigcap_{\mathcal{B}\in\Sigma_\ini}\mathcal{B}_{[0, \Lmax + \Tf - 1]}.
    \end{equation*}
    By~\cite[\S11, Theorem 11]{halmos_finite_vector_spaces}, the intersection of any collection of subspaces is also a subspace. Thus,
    \begin{equation*}
    \begin{aligned}
        &\im
        \begin{pmatrix}
            \genHank{\Lmax + \Tf}{u^\data}{} & \genHank{\Lmax + \Tf}{u^\ini}{}\\
            \genHank{\Lmax + \Tf}{y^\data}{} & \genHank{\Lmax + \Tf}{y^\ini}{}
        \end{pmatrix}\\
        &\subseteq\bigcap_{\mathcal{B}\in\Sigma_\data\cap\Sigma_\ini} \mathcal{B}_{[0, \Lmax + \Tf - 1]}.
        \end{aligned}
    \end{equation*}
    Therefore, by definition of $\Up,\Uf,\Yp,\Yf$ in Algorithm \ref{alg:predict},
    \[
        \im
        \begin{pmatrix}
           \Up\\
           \Uf\\
           \Yp\\
           \Yf
        \end{pmatrix}
        \subseteq\bigcap_{\mathcal{B}\in\Sigma_\data\cap\Sigma_\ini} \mathcal{B}_{[0, \Lmax + \Tf - 1]}.
    \]
    From the above and by definition of $\bar{u}^\ini,\bar{y}^\ini$ from \textsc{Predict}, $y^\f$ must be such that $(\bar{u}^\ini,u^\f,\bar{y}^\ini,y^\f)\in\B_{[0,\Lmax+\Tf-1]}$ for all $\B\in\Sigma_\data\cap\Sigma_\ini$. Since $(\bar{u}^\ini, \bar{y}^\ini)$ is the final $\Lmax$ terms of $(u^\ini, y^\ini)$, we use Lemma \ref{lemma:weaving} to weave them together, giving us $(u^\ini, u^\f, y^\ini. y^\f)\in\B_{[0, \Tini + \Tf - 1]}$. Thus \inform.
\end{proof}
Building a non-void, depth $\ell$ Hankel matrix requires a trajectory of length at least the depth $\ell$. In the case where $T\leq\Lmax$ and $\Tini = \Lmax$ we will only ever have informativity if $u^\ini, y^\ini, u^\f$ are all zero trajectories. Thus, if any of the initial trajectory or future inputs are non-zero, the sufficient condition in Lemma~\ref{lemma:ini_and_data_in_use} will only hold if $\max\{T, \Tini\}\geq\Lmax + \Tf$. This indicates that if $\Tf$ is large, then the amount of data or the length of initial trajectory would also need to be large. Thus we cannot apply this lemma if $\Tf>\max\{T, \Tini\} - \Lmax$. However, we may still be able to do prediction in this case. We present an example which shows it is not necessary to have $\max\{T, \Tini\}\geq\Lmax + \Tf$ for informativity.
\begin{ex}\label{ex:weaving_useful}
    Consider $\B^*$ with state-space representation \eqref{eq:sys1} where $n = m = p = 1$. Take $L = 1$ and $N = 1$. Let $T = 3$ and consider the data trajectory $(u^\data, y^\data)\in\B^*_{[0, T - 1]}$ with \eqref{eq:data}. Let $\Tini = 1$. Consider the initial trajectory $(u^\ini, y^\ini)\in\B^*_{[0, \Tini - 1]}$ with $u^\ini = -2$ and $y^\ini = 1$. Let $\Tf = 6$. Consider the future inputs $u^\f(t) = 2(-1)^t$. Here we have $\Tf > T$, which indicates that Lemma \ref{lemma:ini_and_data_in_use} cannot be used since the data matrices would be void. However, in Example \ref{ex:predict-without-ID}, we demonstrated that for $y^\f(0) = -1, y^\f(1) = 1$ we have $(u^\ini, u^\f(0), u^f(1), y^\ini, y^\f(0), y^f(1))\in\B_{[0, 2]}$ for all $\B\in\Sigma_\data\cap\Sigma_\ini$. By definition of \gls{LTI} behaviors, we must also have $(u^\ini, u^\f(0), y^\ini, y^\f(0))$, $(u^\f(0), u^\f(1), y^\f(0), y^\f(1))\in\B_{[0, 1]}$. Since $L = 1$ and $\ell(\B^*) = 1$, we must have $\Lmax = 1$. Since $(u^\ini, y^\ini) = (u^\f(1), y^\f(1))$, we can apply Lemma \ref{lemma:weaving} and repeatedly weave $(u^\ini, u^\f(0), y^\ini, y^\f(0))$ and $(u^\f(0), u^\f(1), y^\f(0), y^\f(1))$ to show that $y^\f(t) = (-1)^{t + 1}$ satisfies $(u^\ini, u^\f, y^\ini, y^\f)\in\B_{[0, \Tini + \Tf - 1]}$ for all $\B\in\Sigma_\data\cap\Sigma_\ini$. Furthermore, we can take $\Tf$ to be arbitrarily large with $u^\f(t) = 2(-1)^t$ and still uniquely predict the future output response $y^\f$. 
\end{ex}
Example~\ref{ex:weaving_useful} demonstrates that the process of weaving (Lemma~\ref{lemma:weaving}) can give informativity in situations where Lemma~\ref{lemma:ini_and_data_in_use} cannot be used. Algorithm~\ref{alg:weaving} is designed to compute predictions one step at a time and then weave them together to output the full $\Tf$-length prediction.

\begin{algorithm}[t]
\caption{Data-driven Prediction and Weaving}\label{alg:weaving}
\begin{algorithmic}
    \Procedure{PredictAndWeave}{$u^\data, y^\data, u^\ini, y^\ini, u^\f, \ell$}
    \Input $u^\data, y^\data, u^\ini, y^\ini, u^\f, \ell$
    \State $\bar{u}^0\gets u^\ini$
    \State $\bar{y}^0\gets y^\ini$
    \For{$t = 0, 1, \dots, \Tf - 1$} 
        \State $y^\f(t) = $ \textsc{Predict}$(u^\data, y^\data, \bar{u}^t, \bar{y}^t, u^\f(t),\ell)$
        \If{$y^\f(t)$ is null}
            \State \textbf{return} null
        \Else
            \State $\bar{u}^{t + 1}\gets(\bar{u}^t, u^\f(t))$
            \State $\bar{y}^{t + 1}\gets(\bar{y}^t, y^\f(t))$
        \EndIf
    \EndFor
    \State \textbf{return} $y^\f$
    \EndProcedure
\end{algorithmic}
\end{algorithm}
The following theorem provides a sufficient condition for informativity that can be applied regardless of how long the future input sequence is.

\begin{theorem}\label{main_theorem}
    \assumptions If $\Tini\geq\Lmax$ and \textsc{PredictAndWeave}$(u^\data, y^\data, u^\ini, y^\ini, u^\f, \Lmax)$ returns a $y^\f$, then \inform \ and $y^\f$ is the unique prediction.
\end{theorem}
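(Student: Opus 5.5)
We argue by induction on the loop index $t$ of \textsc{PredictAndWeave}, using Lemma~\ref{lemma:ini_and_data_in_use} for each single step and Lemma~\ref{lemma:weaving} to stitch the steps together. Throughout, $\Lmax$ is a fixed number depending only on $(u^\data,y^\data)$ and $(u^\ini,y^\ini)$. As in Lemma~\ref{lem:alt_definition}, the hypothesis $\Tini\geq\Lmax$ together with Lemma~\ref{lemma:lagAndTini} gives condition (\ref{def:i}) of Definition~\ref{def_informativity}. So it remains to show that the returned $y^\f$ satisfies $(u^\ini,u^\f,y^\ini,y^\f)\in\B_{[0,\Tini+\Tf-1]}$ for every $\B\in\Sigma_\data\cap\Sigma_\ini$. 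This is condition (\ref{cor:ii}) of Lemma~\ref{lem:alt_definition}, and uniqueness then follows from condition (\ref{def:i}) applied to $\B^*$.

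The inductive claim is: for each $t\in[0,\Tf]$,
\[
(\bar u^t,\bar y^t)\in\B_{[0,\Tini+t-1]} \quad \text{for all } \B\in\Sigma_\data\cap\Sigma_\ini .
\]
The base case $t=0$ holds by definition of $\Sigma_\ini$. For the inductive step, the call \textsc{Predict}$(u^\data,y^\data,\bar u^t,\bar y^t,u^\f(t),\Lmax)$ uses the Hankel matrices of $(u^\data,y^\data)$ and of $(\bar u^t,\bar y^t)$ at depth $\Lmax+1$. It returns $g$ with
\[
\begin{pmatrix}\Up\\ \Uf\\ \Yp\end{pmatrix} g=\begin{pmatrix}\bar u^t_{[\Tini+t-\Lmax,\Tini+t-1]}\\ u^\f(t)\\ \bar y^t_{[\Tini+t-\Lmax,\Tini+t-1]}\end{pmatrix},
\qquad y^\f(t)=\Yf g .
\]
By Lemma~\ref{lemma:image_of_data}, the image of the data Hankel matrix lies in $\B_{[0,\Lmax]}$ for every $\B\in\Sigma_\data$. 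For the second block I will not apply Lemma~\ref{lemma:image_of_data} with $\Sigma(\bar u^t,\bar y^t)$. Instead I argue directly: each $\B\in\Sigma_\data\cap\Sigma_\ini$ contains $(\bar u^t,\bar y^t)$ by the induction hypothesis. By time invariance, every length-$(\Lmax+1)$ window of $(\bar u^t,\bar y^t)$ then lies in $\B_{[0,\Lmax]}$, and by linearity so does the image of its Hankel matrix. This is the proof of Lemma~\ref{lemma:image_of_data} with $\Sigma(\bar u^t,\bar y^t)$ replaced by the subset $\Sigma_\data\cap\Sigma_\ini$. Hence the last $\Lmax$ samples of $(\bar u^t,\bar y^t)$, extended by $(u^\f(t),y^\f(t))$, form an element of $\B_{[0,\Lmax]}$ for every explaining $\B$.

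We then weave this element onto $(\bar u^t,\bar y^t)$ using Lemma~\ref{lemma:weaving} with overlap $\ell=\Lmax$. This is valid because $\Tini+t\geq\Lmax$ and $\ell(\B)\leq\Lmax$ for every $\B\in\Sigma_\data\cap\Sigma_\ini$. It yields $(\bar u^{t+1},\bar y^{t+1})\in\B_{[0,\Tini+t]}$, closing the induction. At $t=\Tf$ we have $(\bar u^{\Tf},\bar y^{\Tf})=(u^\ini,u^\f,y^\ini,y^\f)$, which proves the claim.

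The main obstacle is the Hankel-image step in the inductive case. There the extended trajectory $(\bar u^t,\bar y^t)$ is not a priori known to lie in $\B^*$, and the relevant set of systems changes with $t$. The fix is the direct argument above: apply the proof of Lemma~\ref{lemma:image_of_data} to the fixed family $\Sigma_\data\cap\Sigma_\ini$, which works because the induction hypothesis gives membership in every such $\B$. Since the true system $\B^*$ also lies in this family, the prediction is indeed its output.
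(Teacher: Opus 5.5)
Your proof is correct and takes the same route as the paper. The paper applies Lemma~\ref{lemma:ini_and_data_in_use} (Hankel-image containment plus weaving) at each step $t$ of \textsc{PredictAndWeave} and reads off condition~(\ref{cor:ii}) of Lemma~\ref{lem:alt_definition} at the last step. Your explicit induction over the fixed family $\Sigma_{\data}\cap\Sigma_{\ini}$ is a tighter version of that argument: citing Lemma~\ref{lemma:ini_and_data_in_use} with $(\bar u^t,\bar y^t)$ as initial trajectory silently needs $(\bar u^t,\bar y^t)\in\B^*_{[0,\Tini+t-1]}$ and $\Sigma_{\data}\cap\Sigma(\bar u^t,\bar y^t)=\Sigma_{\data}\cap\Sigma_{\ini}$ (so that the same $\Lmax$ applies), and only your induction supplies these facts.
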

\begin{proof}
    The requirement of $\Tini\geq\Lmax$ in the theorem statement implies condition (\ref{cor:i}) in Lemma \ref{lem:alt_definition}. We now want to show condition (\ref{cor:ii}) of Lemma \ref{lem:alt_definition}. For each $t\in[0, \Tf - 1]$, take $\bar{u}^t$ and $\bar{y}^t$ from Algorithm \ref{alg:weaving}. By applying Lemma \ref{lemma:ini_and_data_in_use} for each $t$, $(u^\data, y^\data)$ is informative for unique prediction of $u^\f(t)$ from $(\bar{u}^t, \bar{y}^t)$. By definition, for all $t\in[0, T - 1]$, we must have $(\bar{u}^t, u^\f(t), \bar{y}^t, y^\f(t))\in\mathcal{B}_{[0, \Tini + t]}$ for all $\mathcal{B}\in\Sigma_\data\cap\Sigma_\ini$. If we take $t = \Tf - 1$ then we end up with Lemma \ref{lem:alt_definition} condition (\ref{cor:ii}), indicating \inform. 
\end{proof}

\section{Numerical Example}\label{sec:numerical_example}
To demonstrate the effectiveness of Algorithm~\ref{alg:weaving} for determining informativity for unique prediction, we construct two numerical examples: one in which the sufficient condition in Theorem~\ref{main_theorem} is satisfied, and one where it is not satisfied . Let the the true system $\B^*$ have state-space representation,
\begin{subequations}
    \begin{align*}
        x(t + 1) &= 
        \begin{pmatrix}
            1 & 1\\
            -1 & -0.5
        \end{pmatrix}
        x(t) + 
        \begin{pmatrix}
            1 & 1\\
            0 & 1
        \end{pmatrix}
        u(t),\\
        y(t) &= 
        \begin{pmatrix}
            1 & 0
        \end{pmatrix}
        x(t).
    \end{align*}
\end{subequations}
Take $L = N = 2$. Let $T = 8$ and consider the data trajectory $(u^\data, y^\data)\in\B^*_{[0, T - 1]}$ shown in Fig.~\ref{fig:numerical_example_data}.
Let $\Tini = 2$. Consider the initial trajectory $(u^\ini, y^\ini)\in\B^*_{[0, \Tini -1]}$ with $u^\ini(0) = (6, 2), u^\ini(1) = (-1, 5)$ and $y^\ini(0) = y^\ini(1) = 0$. Let $\Tf = 20$. Fig.~\ref{fig:numerical_example_informative_trajectory} shows the future inputs $u^\f$ and prediction computed using Algorithm \ref{alg:weaving}. We observe that the prediction coincides with the true output response from $\B^*$. We can also again see the importance of weaving as Algorithm \ref{alg:predict} would return null in this case as all data matrices would be void.
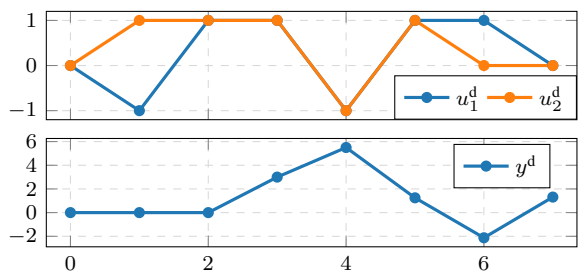
\begin{figure}[t]
    \centering    
    \begin{tikzpicture}
    \begin{groupplot}[
        group style={
            group size=1 by 2,     
            vertical sep=0.25cm,   
        },
        width=\columnwidth,                
        height=0.35\columnwidth,              
        grid=major,
        grid style={dashed, gray!30},
        enlarge x limits=0.05,
        tick label style={font=\footnotesize},
        title style={font=\bfseries}
    ]

    \nextgroupplot[
        legend style={at={(1,0)}, anchor=south east, font=\footnotesize, legend columns=-1},
        xticklabels=\empty,
        mark size=1.5pt
    ]
    
    \addplot[color=matblue, very thick, mark=*] table [x=t, y=u1, col sep=comma] {Plots/data.csv};
    \addlegendentry{$u^\data_1$}
    
    \addplot[color=matorange, very thick, mark=*] table [x=t, y=u2, col sep=comma] {Plots/data.csv};
    \addlegendentry{$u^\data_2$}

    \nextgroupplot[
        legend style={at={(0.95,0.95)}, anchor=north east, font=\footnotesize},
        mark size=1.5pt
    ]
    
    \addplot[color=matblue, very thick, mark=*] table [x=t, y=y, col sep=comma] {Plots/data.csv};
    \addlegendentry{$y^\data$}

    \end{groupplot}
\end{tikzpicture}
    \caption{The top plot shows the input data $u^\data$. The bottom plot shows the output data $y^\data$ corresponding to $u^\data$.}
    \label{fig:numerical_example_data}
\end{figure}

We examine a different sequence of $\Tf = 20$ future inputs $u^\f$ shown in Fig.~\ref{fig:numerical_example_noninformative}. We keep the setup of the problem the same with the same data and initial trajectories as above. Applying Algorithm~\ref{alg:weaving} returns a null value. Since Theorem~\ref{main_theorem} is only a sufficient condition, we in general cannot conclude whether the data is informative for unique prediction. We compute an explaining system of the data,
\begin{equation*}
    \B=\left\{ 
    \begin{pmatrix}
        u\\
        y
    \end{pmatrix}
    \;\middle|\;
{\small\begin{array}{@{}l@{}}
   y(t) = 0.648y(t - 1) - 0.324y(t - 2)\\
   +
   \begin{pmatrix}
       -3.259 \\
       4.022
   \end{pmatrix}^\top u(t)
   +
   \begin{pmatrix}
       1.987\\
       1.225
   \end{pmatrix}^\top u(t - 1)\\
   +
   \begin{pmatrix}
       1.041\\
       0.066
   \end{pmatrix}^\top u(t - 2)\qquad \forall t\in\wholes
\end{array}}
\right\},
    \end{equation*}
and apply $u^\f$ from $(u^\ini, y^\ini)$ and observe the output trajectory $y^\f$ shown in Fig.~\ref{fig:numerical_example_noninformative}. On the other hand, applying the input $u^\f$ from $(u^\ini, y^\ini)$ on the true system $\B^*$, we observe a different output trajectory $y^*$ shown in Fig.~\ref{fig:numerical_example_noninformative}. Furthermore, we can find other explaining systems that have output responses that diverge. Therefore, we can conclude that the data are not informative for unique prediction. 

This highlights the potential consequences of blindly predicting using any explaining system (as is commonly done in certainty equivalence control). What we see is that different explaining systems can have output responses that are drastically different, which could lead to unsafe behavior if the predictions were used in a data-driven control context.

\section{Conclusion}\label{sec:future_work}
We presented a notion of informativity for unique prediction and gave sufficient conditions weaker than persistency of excitation that enable unique prediction. We provided algorithms that directly compute predictions from data. A numerical example was shown that illustrates the ability to uniquely predict without data that are informative for identification. Future work focuses on classifying the set of future input sequences that can be predicted, expanding the result to noisy data, and non-\gls{LTI} systems.

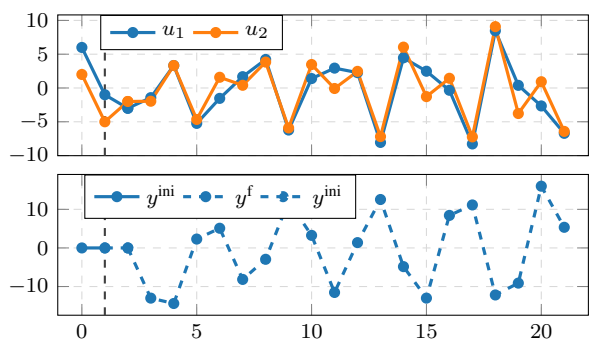
\begin{figure}[t]
    \centering
    \begin{tikzpicture}
    \begin{groupplot}[
        group style={
            group size=1 by 2,     
            vertical sep=0.25cm,  
        },
        width=\columnwidth,                
        height=0.4\columnwidth,              
        grid=major,
        grid style={dashed, gray!30},
        enlarge x limits=0.05,
        tick label style={font=\footnotesize},
        title style={font=\bfseries}
    ]

    \nextgroupplot[
        xticklabels=\empty,
        legend style={at={(0.08,1)}, anchor=north west, font=\footnotesize, legend columns=-1},
        mark size=1.5pt
    ]
    
    \addplot[color=matblue, very thick, mark=*] table [x=t, y=u1, col sep=comma] {Plots/informative_ini.csv};
    \addlegendentry{$u_1$}
    
    \addplot[color=matorange, very thick, mark=*] table [x=t, y=u2, col sep=comma] {Plots/informative_ini.csv};
    \draw[dashed, darkgray, thick] (1, \pgfkeysvalueof{/pgfplots/ymin}) -- (1, \pgfkeysvalueof{/pgfplots/ymax});
    \addlegendentry{$u_2$}

    \addplot[color=matblue, very thick, mark=*] table [x=t, y=u1, col sep=comma] {Plots/informative_future.csv};

    \addplot[color=matorange, very thick, mark=*] table [x=t, y=u2, col sep=comma] {Plots/informative_future.csv};

    \nextgroupplot[
        legend style={at={(0.05,1)}, anchor=north west, font=\footnotesize, legend columns=-1},
        mark size=1.5pt
    ]

    \addplot[color=matblue, very thick, mark=*, restrict x to domain=0:1] table [x=t, y=y, col sep=comma] {Plots/informative_ini.csv};
    \addlegendentry{$y^\ini$}
    \draw[dashed, darkgray, thick] (1, \pgfkeysvalueof{/pgfplots/ymin}) -- (1, \pgfkeysvalueof{/pgfplots/ymax});
    
    \addplot[color=matblue, very thick, dashed, mark=*, mark options={solid}] table [x=t, y=y, col sep=comma] {Plots/informative_future.csv};
    \pgfkeysvalueof{/pgfplots/ymax});
    \addlegendentry{$y^\f$}

    \addplot[color=matblue, very thick, dashed, mark=*, restrict x to domain=1:2] table [x=t, y=y, col sep=comma] {Plots/informative_ini.csv};
    \addlegendentry{$y^\ini$}

    \end{groupplot}
\end{tikzpicture}
    \caption{A future prediction is provided by applying Algorithm \ref{alg:weaving} that is accurate for all explaining systems. The top plot shows the initial input sequence $u^\ini$ followed by the future input sequence $u^\f$. The bottom plot shows the initial output sequence $y^\ini$ followed by the predicted future output $y^\f$ in response to $u^\f$.}
    \label{fig:numerical_example_informative_trajectory}
\end{figure}
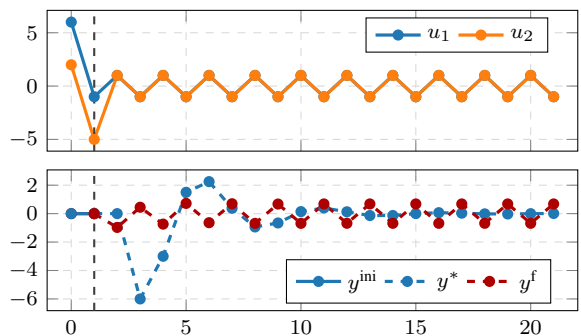
\begin{figure}[t]
    \centering
    \begin{tikzpicture}
    \begin{groupplot}[
        group style={
            group size=1 by 2,     
            vertical sep=0.25cm,   
        },
        width=\columnwidth,                
        height=0.4\columnwidth,              
        grid=major,
        grid style={dashed, gray!30},
        enlarge x limits=0.05,
        tick label style={font=\footnotesize},
        title style={font=\bfseries}
    ]

    \nextgroupplot[
        legend style={at={(0.6,0.95)}, anchor=north west, font=\footnotesize, legend columns=-1},
        xticklabels=\empty,
        mark size=1.5pt
    ]
    
    \addplot[color=matblue, very thick, mark=*] table [x=t, y=u1, col sep=comma] {Plots/noninformative.csv};
    \addlegendentry{$u_1$}
    
    \addplot[color=matorange, very thick, mark=*] table [x=t, y=u2, col sep=comma] {Plots/noninformative.csv};
    \draw[dashed, darkgray, thick] (1, \pgfkeysvalueof{/pgfplots/ymin}) -- (1, \pgfkeysvalueof{/pgfplots/ymax});
    \addlegendentry{$u_2$}

    \nextgroupplot[
        legend style={at={(0.95,0.05)}, anchor=south east, font=\footnotesize, legend columns=-1},
        mark size=1.5pt
    ]

    \addplot[color=ybadcolor, very thick, mark=*, restrict x to domain=0:1, solid, mark options={solid}, forget plot] table [x=t, y=ybad, col sep=comma] {Plots/noninformative.csv};
    
    \addplot[color=matblue, very thick, mark=*, restrict x to domain=0:1] table [x=t, y=ygood, col sep=comma] {Plots/noninformative.csv};
    \pgfkeysvalueof{/pgfplots/ymax});
    \addlegendentry{$y^\ini$}

    \addplot[color=matblue, very thick, mark=*, restrict x to domain=1:30, dashed, mark options={solid}] table [x=t, y=ygood, col sep=comma] {Plots/noninformative.csv}; \pgfkeysvalueof{/pgfplots/ymax});
    \addlegendentry{$y^*$}

    \addplot[color=ybadcolor, very thick, mark=*, restrict x to domain=1:30, dashed, mark options={solid}] table [x=t, y=ybad, col sep=comma] {Plots/noninformative.csv};
    \draw[dashed, darkgray, thick] (1, \pgfkeysvalueof{/pgfplots/ymin}) -- (1, \pgfkeysvalueof{/pgfplots/ymax});
    \addlegendentry{$y^\f$}

    \end{groupplot}
\end{tikzpicture}
    \caption{The top plot shows the initial input sequence $u^\ini$ followed by a future input sequence $u^\f$ for which Algorithm \ref{alg:weaving} returns null. The bottom plot shows the initial output sequence $y^\ini$ followed by the true system $\B^*$'s response $y^*$ and the response $y^\f$ predicted using the explaining system $\B$.}
    \label{fig:numerical_example_noninformative}
\end{figure}

\section*{Acknowledgment}
ChatGPT, Gemini, and Claude were used to aid in \LaTeX~formatting, improving grammar, and debugging the code used in the numerical example. The authors edited the content and take responsibility for the contents of the paper.

\bibliographystyle{IEEEtran}

\begin{thebibliography}{10}
\providecommand{\url}[1]{#1}
\csname url@rmstyle\endcsname
\providecommand{\newblock}{\relax}
\providecommand{\bibinfo}[2]{#2}
\providecommand\BIBentrySTDinterwordspacing{\spaceskip=0pt\relax}
\providecommand\BIBentryALTinterwordstretchfactor{4}
\providecommand\BIBentryALTinterwordspacing{\spaceskip=\fontdimen2\font plus
\BIBentryALTinterwordstretchfactor\fontdimen3\font minus \fontdimen4\font\relax}
\providecommand\BIBforeignlanguage[2]{{%
\expandafter\ifx\csname l@#1\endcsname\relax
\typeout{** WARNING: IEEEtran.bst: No hyphenation pattern has been}%
\typeout{** loaded for the language `#1'. Using the pattern for}%
\typeout{** the default language instead.}%
\else
\language=\csname l@#1\endcsname
\fi
#2}}

\bibitem{hou2013datadriven}
Z.-S. Hou and Z.~Wang, ``From model-based control to data-driven control: Survey, classification and perspective,'' \emph{Information Sciences}, vol. 235, pp. 3--35, 2013.

\bibitem{ljung1998system}
L.~Ljung, ``System identification,'' in \emph{Signal analysis and prediction}.\hskip 1em plus 0.5em minus 0.4em\relax Springer, 1998, pp. 163--173.

\bibitem{vapnik2013nature}
V.~Vapnik, \emph{The nature of statistical learning theory}.\hskip 1em plus 0.5em minus 0.4em\relax Springer science \& business media, 2013.

\bibitem{markovsky2021behavioral}
I.~Markovsky and F.~D{\"o}rfler, ``Behavioral systems theory in data-driven analysis, signal processing, and control,'' \emph{Annual Reviews in Control}, vol.~52, pp. 42--64, 2021.

\bibitem{data_driven_simulation}
\BIBentryALTinterwordspacing
I.~Markovsky and P.~Rapisarda, ``Data-driven simulation and control,'' \emph{International Journal of Control}, vol.~81, no.~12, pp. 1946--1959, 2008. [Online]. Available: \url{https://doi.org/10.1080/00207170801942170}
\BIBentrySTDinterwordspacing

\bibitem{DeePC}
J.~Coulson, J.~Lygeros, and F.~Dörfler, ``Data-enabled predictive control: In the shallows of the {DeePC},'' in \emph{2019 18th European Control Conference (ECC)}, 2019, pp. 307--312.

\bibitem{favoreel1999spc}
W.~Favoreel, B.~De~Moor, and M.~Gevers, ``{SPC}: Subspace predictive control,'' \emph{IFAC Proceedings Volumes}, vol.~32, no.~2, pp. 4004--4009, 1999.

\bibitem{poldermanBook}
\BIBentryALTinterwordspacing
J.~W. Polderman and J.~C. Willems, \emph{\BIBforeignlanguage{en}{Introduction to {Mathematical} {Systems} {Theory}}}, ser. Texts in {Applied} {Mathematics}, J.~E. Marsden, L.~Sirovich, M.~Golubitsky, W.~Jäger, and F.~John, Eds.\hskip 1em plus 0.5em minus 0.4em\relax New York, NY: Springer New York, 1998, vol.~26. [Online]. Available: \url{http://link.springer.com/10.1007/978-1-4757-2953-5}
\BIBentrySTDinterwordspacing

\bibitem{willems2005note}
J.~C. Willems, P.~Rapisarda, I.~Markovsky, and B.~L. De~Moor, ``A note on persistency of excitation,'' \emph{Systems \& Control Letters}, vol.~54, no.~4, pp. 325--329, 2005.

\bibitem{informativity}
H.~J. Van~Waarde, J.~Eising, M.~K. Camlibel, and H.~L. Trentelman, ``The informativity approach: To data-driven analysis and control,'' \emph{IEEE Control Systems Magazine}, vol.~43, no.~6, pp. 32--66, 2023.

\bibitem{weaving}
\BIBentryALTinterwordspacing
I.~Markovsky, J.~C. Willems, P.~Rapisarda, and B.~L. De~Moor, ``\BIBforeignlanguage{en}{Algorithms for deterministic balanced subspace identification},'' \emph{\BIBforeignlanguage{en}{Automatica}}, vol.~41, no.~5, pp. 755--766, May 2005. [Online]. Available: \url{https://linkinghub.elsevier.com/retrieve/pii/S0005109804003036}
\BIBentrySTDinterwordspacing

\bibitem{camlibel_beyond_2024}
\BIBentryALTinterwordspacing
K.~Camlibel and P.~Rapisarda, ``Beyond the fundamental lemma: from finite time series to linear system,'' May 2024, arXiv:2405.18962 [math]. [Online]. Available: \url{http://arxiv.org/abs/2405.18962}
\BIBentrySTDinterwordspacing

\bibitem{halmos_finite_vector_spaces}
P.~R. Halmos, \emph{Finite-Dimensional Vector Spaces}, ser. Undergraduate Texts in Mathematics.\hskip 1em plus 0.5em minus 0.4em\relax Springer, 1993.

\end{thebibliography}

\end{document}